\numberwithin{equation}{section}
\newcommand{\nc}{\newcommand}
\newtheorem{thm}{Theorem}[section]
\newtheorem{lemma}[thm]{Lemma}
\newtheorem{lem}[thm]{Lemma}
\newtheorem{thm-dfn}[thm]{Theorem-Definition}
\theoremstyle{definition}
\newtheorem{rmk}{Remark}[section]
\numberwithin{equation}{section}
\nc{\bA}{{\mathbb A}}
\nc{\bB}{{\mathbb B}}
\nc{\bC}{{\mathbb C}}
\nc{\bD}{{\mathbb D}}
\nc{\bE}{{\mathbb E}}
\nc{\bF}{{\mathbb F}}
\nc{\bG}{{\mathbb G}}
\nc{\bH}{{\mathbb H}}
\nc{\bI}{{\mathbb I}}
\nc{\bJ}{{\mathbb J}}
\nc{\bK}{{\mathbb K}}
\nc{\bL}{{\mathbb L}}
\nc{\bM}{{\mathbb M}}
\nc{\bN}{{\mathbb N}}
\nc{\bO}{{\mathbb O}}
\nc{\bP}{{\mathbb P}}
\nc{\bQ}{{\mathbb Q}}
\nc{\bR}{{\mathbb R}}
\nc{\bS}{{\mathbb S}}
\nc{\bT}{{\mathbb T}}
\nc{\bU}{{\mathbb U}}
\nc{\bV}{{\mathbb V}}
\nc{\bW}{{\mathbb W}}
\nc{\bZ}{{\mathbb Z}}
\nc{\bX}{{\mathbb X}}
\nc{\bY}{{\mathbb Y}}
\newcommand{\cO}{{\mathcal O}}
\newcommand{\cA}{{\mathcal A}}
\newcommand{\cN}{{\mathcal N}}
\newcommand{\cL}{{\mathcal L}}
\newcommand{\Hom}{{\operatorname{Hom}}}
\newcommand{\Sym}{{\operatorname{Sym}}}
\newcommand{\het}{{\operatorname{ht}}}
\newcommand{\Lb}{{\mathfrak{b}}}
\newcommand{\Lg}{{\mathfrak g}}
\newcommand{\Ln}{{\mathfrak{n}}}
\newcommand{\Lh}{{\mathfrak{h}}}
\nc{\al}{{\alpha }}
\nc{\ga}{{\gamma }}
\nc{\de}{{\delta }}
\nc{\del}{{\partial }}
\nc{\ep}{{\varepsilon }}
\nc{\vap}{{\epsilon }}
\nc{\ze}{{\zeta }}
\nc{\et}{{\eta }}
\nc{\vth}{{\vartheta }}
\nc{\io}{{\iota }}
\nc{\ka}{{\kappa }}
\nc{\vrho}{{\varrho}}
\nc{\si}{{\sigma }}
\nc{\ups}{{\upsilon }}
\nc{\vphi}{{\varphi }}
\nc{\om}{{\omega }}
\nc{\Ga}{{\Gamma }}
\nc{\De}{{\Delta }}
\nc{\nab}{{\nabla}}
\nc{\Th}{{\Theta }}
\nc{\Si}{{\Sigma }}
\nc{\Ups}{{\Upsilon }}
\nc{\Om}{{\Omega }}
\newcommand{\fb}{{\mathfrak{b}}}
\newcommand{\fg}{{\mathfrak g}}
\newcommand{\fn}{{\mathfrak{n}}}
\newcommand{\tk}{{\mathbb{C}}}
\newcommand{\la}{\langle}
\newcommand{\ra}{\rangle}
\nc{\ot}{\otimes}
\newcommand{\wt}{\widetilde}
\nc{\oh}{{\operatorname{H}}}
\nc{\gr}{{\operatorname{gr}}}
\nc{\Supp}{{\operatorname{Supp}}}
\nc{\PSupp}{{\operatorname{PSupp}}}
\newcommand{\beqn}{\begin{equation*}}
\newcommand{\eeqn}{\end{equation*}}
\newcommand{\beq}{\begin{equation}}
\newcommand{\eeq}{\end{equation}}
\newcommand{\bern}{\begin{eqnarray*}}
\newcommand{\eern}{\end{eqnarray*}}
\begin{document}

\title{The null-cone and cohomology of vector bundles on flag varieties}

\author{Kari Vilonen}\address{School of Mathematics and Statistics, University of Melbourne, VIC 3010, Australia, also Department of Mathematics and Statistics, University of Helsinki, Helsinki, Finland}
\email{kari.vilonen@unimelb.edu.au, kari.vilonen@helsinki.fi}
\thanks{The first author was supported in part by NSF grants DMS-1402928 \& DMS-1069316, the Academy of Finland, and the ARC grant DP150103525.}

\author{Ting Xue}
\address{School of Mathematics and Statistics, University of Melbourne, VIC 3010, Australia, also Department of Mathematics and Statistics, University of Helsinki, Helsinki, Finland} 
\email{ting.xue@unimelb.edu.au}

\thanks{The second author was supported in part by the ARC grants DP150103525, DE160100975, and the Academy of Finland.}

\begin{abstract}
We study the null-cone of a semi-simple algebraic group acting on a number of copies of its Lie algebra via the diagonal adjoint action. We show that the null cone has rational singularities in the case of $SL_3$. We observe  by example that the null-cone is not normal in general and that the normalization of the null-cone does not have rational singularities in general. This is achieved by computing cohomology of certain vector bundles on flag varieties.
\end{abstract}

\maketitle

\section{Introduction}

In this paper we study the null-cone of a semi-simple algebraic group acting on a number of copies of its Lie algebra via the diagonal adjoint action. Such actions and their generalizations have been considered by various authors, see, for example,  \cite{CM,KW1,KW2,LMP}. Our interest in the question was motivated by applications in the study of ordinary deformations of Galois representations. This point of view is due to Snowden who  studied the case $\Lg=\mathfrak{sl}_2$ in~\cite{S}. In the case of $\Lg=\mathfrak{sl}_2$ he shows that the null-cones (and other related spaces that come up in ordinary deformations of Galois representations) are Cohen-Macaulay but not Gorenstein. In the case of characteristic zero the method of Snowden amounts to proving that the null-cone has rational singularities. In this paper we show that for $\Lg=\mathfrak{sl}_3$ the null-cones do still have rational singularities, and hence are Cohen-Macaulay. However, we also show that this fails in general. For example, it is not difficult to see that the null cone is not normal when the group is of type $B_2$. In  type $A_5$ we further show that the normalization of the null-cone does not have rational singularities. We do this by giving estimates on cohomology groups of equvariant vector bundles on flag varieties. 

In~\cite{H} Hesselink shows that the nilpotent cone of a semi-simple Lie algebra has rational singularities. As the cotangent  bundle of the flag variety is a resolution of singularities of the nilpotent cone, one is  reduced to proving  cohomology vanishing of the symmetric algebra of the tangent bundle of the flag variety. By utilizing the Koszul complex this is reduced to the Borel-Weil-Bott theorem. In our case of multiple copies of the Lie algebra the situation is more complicated as one is reduced to analyzing the cohomology of symmetric powers of direct sums of copies of the tangent bundle of a flag variety. This, in turn, is reduced to analyzing cohomology groups of the whole tensor algebra of the tangent bundle. We carry out this analysis just far enough to produce the counterexamples. The calculation of the cohomology groups of the tensor algebra of the tangent bundle appears to be a difficult problem.

The paper is organized as follows. In  section 2 we introduce the null-cone and in section 3 we make some general remarks on cohomology of equivariant vector bundles on flag varieties. In section 4 we reduce the question of rational singularities to a question of cohomology of equivariant vector bundles on flag varieties. In section 5 we compute large enough piece of the cohomology of certain equivariant vector bundles and in section 6 we state and prove our main results. We manage to perform this computation sufficiently far to produce the counterexamples.

As our results in this paper are mostly  counterexamples we will just work in characteristic zero and will only comment on the finite characteristic case. 

{\bf Acknowledgement.} This work was partially performed at the Max Planck Institute for Mathematics in Bonn, the Research Institute of Mathematical Sciences in Kyoto, and the Mathematical Sciences Research Institute in Berkeley. The authors thank these institutions for their hospitality. We also thank Frank Calegari for bringing this question to our attention. 
Furthermore, we thank the referee for careful reading of the manuscript and for helpful suggestions. 
\section{The Null-cone}

We work over a field of characteristic zero which we can and will, for simplicity, take to be $\bC$. 
Let $\fg$ be a semi-simple Lie algebra, and let us write $G$ for the corresponding adjoint group $\operatorname{Aut}(\fg)^0$. 

We write $\Lb$ for Borel subalgebras of $\Lg$ and $\Ln=[\Lb,\Lb]$ for their nil-radicals. Similarly, we write $B$ for Borel subgroups of $G$.

The group $G$ acts diagonally on $\Lg^{\oplus r}$ via the adjoint action. We write 
\beqn
\cA_r=\{(x_1,\ldots,x_r)\in\Lg^{\oplus r}\,|\,f(x_1,\ldots,x_r)=0\ \forall\ f\in\tk[\Lg^{\oplus r}]^G_+\}
\eeqn
for the (reduced) invariant theory null-cone associated to this action. The null-cone itself is, in general, a non-reduced scheme, but in what follows we will work with its underlying reduced scheme structure.
 It is not difficult to see that this variety can also be described in the following manner (see \cite{KW2}):
\beqn
\cA_r=\{(x_1,\ldots,x_r)\in\Lg^{\oplus r}\,|\,\exists \text{ Borel subalgebra }\Lb\text{ such that }x_i\in\Ln\text{ for all }i\}.
\eeqn
Furthermore, in the  case $\Lg=\mathfrak{sl}_n$, we can view $\cA_r$ as 
\beqn
\cA_r=\{(x_1,\ldots,x_r)\in\Lg^{\oplus r}\,|\,x_{i_1}\cdots x_{i_n}=0 \ \text{for all choices of} \ i_1,\dots,i_n\}\,.
\eeqn

The variety $\cA_r$ has a natural $G$-equivariant resolution of singularities which can be described as follows. Let $X$ denote the flag variety of $G$ then the resolution is given by:
\beqn
\xymatrix{{\wt \cA}_r:=G\times_B\Ln^{\oplus r}\cong(T^*X)^{\oplus r} \ar[r]^-{\ \varphi_r\ }&\cA_r}
\eeqn
where  the map $\varphi_r$ is given by $\varphi_r(g,x_1,\ldots,x_r)=(g\,x_1,\ldots,g\,x_r)$. We write $$\pi:{\wt \cA}_r\cong(T^*X)^{\oplus r} \to X$$ for the projection. 

 In the case of $r=1$ the null-cone $\cA_1=\cN$ is the usual nilpotent cone and the resolution $\varphi_1: T^*X \to \cN$ is often referred to as the Springer resolution. In this case the null-cone is reduced and the ring of invariants exhibits it as a complete intersection.  In the case of $r=2$ Charbonnel and Moreau~\cite{CM} defined a nilpotent bicone and they show that it is a complete intersection and is in general not reduced. The variety $\cA_2$ is an irreducible component of the bicone. 
 
 The case of $\Lg=\mathfrak{sl}_2$  was studied by Snowden~\cite{S} where he proves that in this case $\cA_r$ has rational singularities and so is, in particular, Cohen-Macaulay. 

As $\varphi_r : {\wt \cA}_r \to \cA_r$ is a resolution of singularities of an affine variety we conclude that
\beqn
\text{$\Gamma({\wt \cA}_r , \cO_{{\wt \cA}_r })\ \ $  is the normalization of  $\ \ \bC[\cA_r]$}\,.
\eeqn
It is easy to see that $\cA_r$ has rational singularities, i.e., that $R\varphi_{r*}  \cO_{{\wt \cA}_r } \cong \cO_{\cA_r}$ if and only if
\beqn
\oh^i({\wt \cA}_r,\cO_{{\wt \cA}_r})=0 \ \ \text{for} \ i>0 \qquad \text{and}\qquad \text{$\Gamma({\wt \cA}_r , \cO_{{\wt \cA}_r })=\bC[\cA_r]$}\,.
\eeqn

\section{Cohomology of equivariant vector bundles}

Let us consider the flag variety  $X=G/B$, where we think of having chosen a particular Borel subgroup $B$ as a base point. Then we have an equivalence of categories
\begin{equation*}
\{\text{$G$-equivariant vector bundles on $X$}\} \longleftrightarrow \{\text{$B$-representations}\}\,.
\end{equation*}
Given a $B$-representation $E$ we will use the same symbol $E$ to denote the corresponding vector bundle. Thus, $\oh^k(X,E)$ stands for the cohomology groups of the $G$-equivariant vector bundle associated to $E$. The cohomology groups $\oh^k(X,E)$  are $G$-representations, of course. 

Let us write $T\subset B$ for a maximal torus and $X^*(T)$ for the group of characters $\lambda:T\to\bG_m$. We also write $\Phi\subset X^*(T)$ for the set of roots  and we choose a positive root system $\Phi^+$ such that the roots in $B$ are negative. We also write
\beqn
X^+(T) \ = \ \{\lambda\in X^*(T)\mid \la\lambda,\check\alpha\ra\geq 0 \ \text{for} \ \alpha\in\Phi^+\}
\eeqn
for the dominant weights; here $\check\alpha$ stands for the coroot associated to $\alpha$. We write $L(\lambda)$ for the irreducible representation of $G$ associated to the highest weight $\lambda\in X^+(T)$. Given a representation $V$ of $G$ we write 
\beqn
\Supp(V) = \{\lambda \in  X^+(T)\mid L(\lambda) \ \text{occurs as a direct summand in}\ V\}
\eeqn
for the support of $V$; it is, of course, a subset of the dominant weights $X^+(T)$. 

We can view each $\lambda\in X^*(T)$ also as a character of $B$ and in this manner $\lambda$ gives rise to a $G$-equivariant line bundle $\cL_\lambda$. If $\lambda$ is dominant then $L(\lambda) = \oh^0(X,\cL_\lambda)$. The ``dot" action of the Weyl group $W$ on $X^*(T)$ is given by $w\cdot \lambda= w(\lambda+\rho)-\rho$; here $\rho$, as usual, is half the sum of positive roots. We recall the statement of the Borel-Weil-Bott theorem:
\beq\label{BWB}
\begin{gathered}
\text{If there exists a $w\in W$ such that $w\cdot \lambda\in X^+(T)$ then  }
\\
\oh^k(X,\cL_{\lambda})\simeq\left\{\begin{array}{ll}\oh^0(X,\cL_{w\cdot\lambda})&\text{if }k=l(w)\\0&\text{otherwise}\,.\end{array}\right.
\end{gathered}
\eeq
This statement says, in particular, that if there is no $w\in W$ such that $w\cdot \lambda$ is dominant then the cohomology of $\cL_{\lambda}$ vanishes in all degrees. 

Consider a $B$-representation $E$ and the cohomology groups $\oh^k(X,E)$ of the corresponding $G$-equivariant vector bundle.  We will give a simple upper bound for $\Supp(\oh^k(X,E))$. Let us choose a filtration of the $B$-representation $E$ such that the associated graded $\gr E$ is a direct sum of one dimensional representations, i.e., of characters of $T$. Let us write $\chi(E)$ for the set of characters appearing in this direct sum decomposition:
\beqn
\gr E \ \cong \ \bigoplus_{\lambda_i\in \chi(E)} \lambda_i^{\oplus n_i} \,.
\eeqn
Thus we obtain a filtration of the  $G$-bundle $E$ such that the associated graded is a direct sum of $G$-equivariant line bundles. From this it is easy to conclude:
\beqn
\Supp(\oh^k(X,E)) \subset \{\lambda\in X^+(T)\mid \text{$w\cdot \lambda\in\chi(E) $ for a $w\in W$ with $\ell(w)=k$}\}\,.
\eeqn
Let us call the right-hand side of this equality the {\it potential support} of $\oh^k(X,E)$ and we write $\PSupp(\oh^k(X,E))$ for it.

Let us recall that either by a direct calculation or using the Hodge decomposition for the flag variety and, identifying $\fn$ and $(\fg/\fb)^*$ via the Killing form, we conclude:
\begin{equation}
\label{Hodge}
 \oh^i(X,\wedge^k\fn)=
\left\{\begin{array}{ll} 0 \text{ if $i\neq k$}
\\
\text{trivial $G$-module of dimension $\ |\{w\in W\,|\,l(w)=k\}|$ if $i=k$}.\end{array}\right.  
\end{equation}
Finally, as part of the direct calculation one makes use of the following:
\begin{lem}
\label{distinct roots}
If $\chi$ is a sum of distinct negative roots and $w\cdot \chi$ is dominant then $w\cdot \chi=0$.
\end{lem}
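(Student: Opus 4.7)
The plan is to rephrase the hypothesis on $\chi$ in a $W$-symmetric way and then reduce to an elementary positivity statement. Writing $\chi=-\sum_{\alpha\in S}\alpha$ for a subset $S\subset\Phi^+$ of distinct positive roots, and using $\rho=\tfrac12\sum_{\alpha\in\Phi^+}\alpha$, one can rewrite
\[ \chi+\rho \;=\; \tfrac12\sum_{\alpha\in\Phi^+}\e_\alpha\,\alpha,\qquad \e_\alpha\in\{\pm1\}, \]
with $\e_\alpha=-1$ precisely when $\alpha\in S$. The observation I would make is that the set of weights of this shape is stable under $W$: applying $w$ and reindexing the sum by $\beta=\pm w\alpha\in\Phi^+$ (with the sign dictated by whether $w^{-1}\beta$ is positive or negative) rewrites $w(\chi+\rho)=\tfrac12\sum_{\beta\in\Phi^+}\e'_\beta\,\beta$ for some $\e'_\beta\in\{\pm1\}$. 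This reindexing is routine bookkeeping and is essentially the only computation the argument requires.

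Subtracting $\rho=\tfrac12\sum_\beta\beta$ then gives
\[ w\cdot\chi \;=\; w(\chi+\rho)-\rho \;=\; -\sum_{\beta\in T}\beta,\qquad T \;=\; \{\beta\in\Phi^+\mid \e'_\beta=-1\}, \]
so $w\cdot\chi$ is again a sum of distinct negative roots. The conclusion of the lemma is thus reduced to showing that a nonzero weight of this form cannot lie in $X^+(T)$.

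For this final step, suppose $\tau=\sum_i c_i\alpha_i$ is a nonzero non-negative integer combination of simple roots. Positive definiteness of the Weyl-invariant form $(\,,\,)$ gives $0<(\tau,\tau)=\sum_i c_i(\tau,\alpha_i)$, so $(\tau,\alpha_i)>0$ for some $i$; since $(\tau,\alpha_i)$ is a positive multiple of $\la\tau,\check\alpha_i\ra$, we conclude $\la-\tau,\check\alpha_i\ra<0$, so $-\tau$ is not dominant. Applying this with $\tau=\sum_{\beta\in T}\beta$ forces $T=\emptyset$, i.e.\ $w\cdot\chi=0$. The only mild obstacle is carrying out the reindexing in the first step carefully; everything else is formal.
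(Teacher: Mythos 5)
Your argument is correct and complete. Note that the paper does not actually prove this lemma; it simply cites \cite[6.18 Proposition]{J}, so your write-up supplies a self-contained proof where the paper defers to a reference. The two ingredients you use --- the $W$-stability of the set of weights of the form $\tfrac12\sum_{\alpha\in\Phi^+}\epsilon_\alpha\alpha$ with $\epsilon_\alpha\in\{\pm1\}$ (equivalently, that $w\cdot\chi$ is again a sum of distinct negative roots), together with the observation that a nonzero non-negative combination of simple roots pairs strictly positively with some simple coroot --- constitute the standard proof, essentially the one found in Jantzen. The reindexing step you flag as the ``only computation'' is indeed routine: $\alpha\mapsto\lvert w\alpha\rvert$ is a bijection of $\Phi^+$, and the sign $\epsilon'_\beta$ flips exactly when $w^{-1}\beta\in\Phi^-$. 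No gaps.
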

For a proof of this lemma see, for example, \cite[6.18 Proposition]{J}.

\section{Some reductions}

Recall that we have reduced the question of normality and rational singularities to the study of
\beqn
\oh^i({\wt \cA}_r,\cO_{{\wt \cA}_r}) =  \oh^i((T^*X)^{\oplus r}, \cO_{(T^*X)^{\oplus r}} )\,.
\eeqn
We have
\beqn
\oh^i({\wt \cA}_r,\cO_{{\wt \cA}_r}) =  \oh^i((T^*X)^{\oplus r}, \cO_{(T^*X)^{\oplus r}} )=\oh^i(X,\text{Sym}((\Lg/\Lb)^{\oplus r}))\,.
\eeqn
In the latter equality we use the fact that the tangent bundle of $X$ is $TX= \Lg/\Lb$ as $G$-equivariant vector bundles; recall our notational convention that the $B$-representation $\Lg/\Lb$ also stands for the corresponding $G$-equivariant bundle. Thus we are reduced to analyzing the cohomology groups $\oh^i(X,\text{Sym}((\Lg/\Lb)^{\oplus r}))$.
In particular,  $\cA_r$ is normal if and only if
\beqn
\bC[\cA_r] = \oh^0(X,\text{Sym}((\Lg/\Lb)^{\oplus r}))\,.
\eeqn
We then conclude that 
\beqn
\text{$\cA_r$ is normal if and only if the map $\Sym(\Lg^{\oplus r}) \to \oh^0(X,\Sym((\Lg/\Lb)^{\oplus r}))$ is onto\,}
\eeqn
and 
\beq\label{vanishing..}
\begin{gathered}
\text{$\cA_r$ has rational singularities if and only if it is}
\\
\text{normal and $\oh^i(X,\text{Sym}((\Lg/\Lb)^{\oplus r}))=0$ for $i>0$\,.}
\end{gathered}
\eeq
 
We will next make some very general reductions for the vanishing of the higher cohomology in \eqref{vanishing..}. As we will show later, the higher cohomology does not vanish in general and hence the general reductions are not so useful. We will be able to obtain more precise statements later. However, the general remarks below are perhaps helpful as a general guide. 

We begin with some simple lemmas:
\begin{lemma}\label{lem-sts}
Suppose that $E$ is a $B$-module.
Then for all $i\geq 1$,
\beqn
\oh^i(X,\Sym(E^{\oplus r}))=0\text{ for all }r\geq 1\Longleftrightarrow \oh^i(X,E^{\otimes r})=0\text{ for all }r\geq 1.
\eeqn
\end{lemma}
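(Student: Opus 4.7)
The plan is to exploit the canonical $B$-equivariant decomposition
\begin{equation*}
\Sym(E^{\oplus r}) \;\cong\; \bigoplus_{(n_1,\ldots,n_r)\in \bZ_{\geq 0}^r}\Sym^{n_1}(E)\otimes\cdots\otimes\Sym^{n_r}(E),
\end{equation*}
which follows from $\Sym(V\oplus W)\cong \Sym(V)\otimes \Sym(W)$, together with the compatibility of $\oh^i(X,-)$ with direct sums.

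For the direction $(\Rightarrow)$, the summand indexed by $(1,1,\ldots,1)$ is literally $E^{\otimes r}$, so $E^{\otimes r}$ is a $B$-module direct summand of $\Sym(E^{\oplus r})$ and vanishing of the cohomology of the latter transfers immediately to $\oh^i(X,E^{\otimes r})$. For the direction $(\Leftarrow)$, I would invoke characteristic zero: the symmetrization projector $\tfrac{1}{n!}\sum_{\sigma\in S_n}\sigma$ on $E^{\otimes n}$ is a $B$-equivariant splitting of the surjection $E^{\otimes n}\twoheadrightarrow \Sym^n(E)$, so each $\Sym^{n}(E)$ is a $B$-direct summand of $E^{\otimes n}$. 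Tensoring these splittings displays $\Sym^{n_1}(E)\otimes\cdots\otimes\Sym^{n_r}(E)$ as a direct summand of $E^{\otimes N}$ with $N=n_1+\cdots+n_r$. For $N\geq 1$ the hypothesis gives $\oh^i(X,E^{\otimes N})=0$ and hence $\oh^i$ of the summand vanishes; the remaining case $N=0$ yields the trivial bundle $\cO_X$, whose higher cohomology on the flag variety vanishes (this is the $k=0$ instance of \eqref{Hodge}).

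The only essential ingredient is characteristic zero, which underlies the splitting of $E^{\otimes n}\to \Sym^n(E)$; this is already assumed throughout the paper, so I do not anticipate any genuine obstacle, and the argument is otherwise formal once the $\Sym$-decomposition is in hand. In positive characteristic the same formal decomposition still holds but $\Sym^n(E)$ is only a quotient of $E^{\otimes n}$ rather than a direct summand, so the $(\Leftarrow)$ direction would have to be replaced by a longer devissage using the $S_n$-filtration of $E^{\otimes n}$.
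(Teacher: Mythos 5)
Your proposal is correct and follows essentially the same route as the paper: the $(\Rightarrow)$ direction via $E^{\otimes r}$ being a direct summand of $\Sym(E^{\oplus r})$, and the $(\Leftarrow)$ direction via the characteristic-zero symmetrization splitting exhibiting symmetric powers as direct summands of tensor powers (the paper applies the projector to $(E^{\oplus r})^{\otimes k}$ rather than factorwise to each $\Sym^{n_j}(E)$, an immaterial difference). Your explicit treatment of the $N=0$ summand and the remark about positive characteristic are slightly more careful than the paper's proof but do not change the argument.
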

\begin{proof}
We have $\text{Sym}(E^{\oplus r})=(\text{Sym}(E))^{\otimes r}=\bigoplus (S^{i_1}E\otimes\cdots\otimes S^{i_r}E)$. So $E^{\otimes r}$ is a direct summand of $\text{Sym}(E^{\oplus r})$ and thus ``$\Rightarrow$'' follows. In the other direction, assume that $\oh^i(X,E^{\otimes r})=0\text{ for all } r\geq 1.$ Then we have 
$\oh^i(X,(E^{\oplus r})^{\otimes k}) =0$ for all $r,k\geq 1$. Now $\text{Sym}^k(E^{\oplus r})$ is a direct summand of $(E^{\oplus r})^{\otimes k}$. It follows that $\oh^i(X,\Sym^k(E^{\oplus r})) =0$ for all $r,k\geq 1$ and thus $\oh^i(X,\Sym(E^{\oplus r})) =0$ for all $r\geq 1$. \end{proof}

In the same way we obtain
\begin{lemma}\label{lem-stw}
Suppose that $E$ is a $B$-module.
Then 
\beqn
\oh^i(X,\wedge^r(E^{\oplus k}))=0\text{ for all }r,k\geq 1,i\geq r+1\Longleftrightarrow \oh^i(X,E^{\otimes r})=0\text{ for all }r\geq 1, i\geq r+1.
\eeqn
\end{lemma}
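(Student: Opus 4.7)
The plan is to imitate the argument used in Lemma~\ref{lem-sts}, replacing the symmetric power decomposition by the analogous exterior power decomposition, and exploit the fact that in characteristic zero every $\wedge^a E$ is a direct summand of $E^{\otimes a}$.

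First I would record the key combinatorial identity: for any $B$-modules one has $\wedge^r(E\oplus F)=\bigoplus_{a+b=r}\wedge^a E\otimes \wedge^b F$, and iterating this gives
\beqn
\wedge^r(E^{\oplus k})\ \cong\ \bigoplus_{a_1+\cdots+a_k=r}\wedge^{a_1}E\otimes\cdots\otimes\wedge^{a_k}E.
\eeqn
All summands are $B$-equivariant, so this decomposition is inherited by cohomology.

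For the direction ``$\Rightarrow$'', I would pick $k\geq r$ and isolate those multi-indices with $a_{i_1}=\cdots=a_{i_r}=1$ for some $1\le i_1<\cdots<i_r\le k$ and the remaining $a_j=0$. Since $\wedge^1 E=E$ and $\wedge^0 E=\bC$, each such summand is a copy of $E^{\otimes r}$, so $E^{\otimes r}$ sits as a direct summand of $\wedge^r(E^{\oplus k})$ (with multiplicity $\binom{k}{r}$). Hence $\oh^i(X,E^{\otimes r})$ is a direct summand of $\oh^i(X,\wedge^r(E^{\oplus k}))$, and the assumed vanishing for $i\ge r+1$ transfers to $E^{\otimes r}$.

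For the direction ``$\Leftarrow$'', I would use that in characteristic zero the antisymmetrization idempotent in $\bC[S_{a_j}]$ realizes $\wedge^{a_j}E$ as a $B$-equivariant direct summand of $E^{\otimes a_j}$. Tensoring these inclusions together shows that each summand $\wedge^{a_1}E\otimes\cdots\otimes\wedge^{a_k}E$ in the expansion above is a direct summand of $E^{\otimes(a_1+\cdots+a_k)}=E^{\otimes r}$. Therefore $\wedge^r(E^{\oplus k})$ is a direct summand of a finite direct sum of copies of $E^{\otimes r}$, and the assumed vanishing of $\oh^i(X,E^{\otimes r})$ for $i\ge r+1$ propagates.

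There is essentially no serious obstacle: the argument is entirely formal once one has the exterior-algebra version of the binomial identity and the characteristic-zero splitting $\wedge^a E\hookrightarrow E^{\otimes a}$. The only point requiring any care is to verify that both inclusions ($E^{\otimes r}\hookrightarrow \wedge^r(E^{\oplus k})$ and $\wedge^{a_1}E\otimes\cdots\otimes\wedge^{a_k}E\hookrightarrow E^{\otimes r}$) are genuinely $B$-equivariant, so that the cohomology conclusions are legitimate; this is automatic since all maps involved are built from the Schur functors and permutations of factors, both of which are natural with respect to $B$-module homomorphisms.
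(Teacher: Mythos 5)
Your proof is correct and follows essentially the same route the paper intends: the paper proves this lemma "in the same way" as Lemma~\ref{lem-sts}, i.e.\ via the decomposition of $\wedge^r$ of a direct sum and the characteristic-zero fact that exterior powers split off tensor powers, which is exactly your argument. The only cosmetic difference is that for ``$\Leftarrow$'' you embed each summand $\wedge^{a_1}E\otimes\cdots\otimes\wedge^{a_k}E$ into $E^{\otimes r}$ individually, whereas the paper's template realizes $\wedge^r(E^{\oplus k})$ directly as a summand of $(E^{\oplus k})^{\otimes r}$; both rest on the same idempotent and yield the same conclusion.
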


Let us now consider the  vanishing statement:
\beq
\label{cohomology vanishing}
\oh^i(X,\text{Sym}((\Lg/\Lb)^{\oplus r}))=0\text{ for }\ i\geq 1\,.
\eeq
Consider the short exact sequence
\beqn
0\to\Lb^{\oplus r}\to\Lg^{\oplus r}\to(\Lg/\Lb)^{\oplus r}\to 0. 
\eeqn
Its associated  Koszul complex provides a resolution of the $\operatorname{Sym}^{m}(\Lg^{\oplus r})$-module $\operatorname{Sym}^{m}((\Lg/\Lb)^{\oplus r})$ as follows:
\beq\label{koszul complex}
\dots \wedge^{i}(\Lb^{\oplus r}) \ot \operatorname{Sym}^{m-i} (\Lg^{\oplus r})\to\dots \to \operatorname{Sym}^{m}(\Lg^{\oplus r})\to \text{Sym}^{m}((\Lg/\Lb)^{\oplus r})\to0.
\eeq
As the $\operatorname{Sym}^{m-i} (\Lg^{\oplus r})$ are $G$-representations the corresponding $G$-equivariant vector bundles are trivial.
We conclude that the vanishing statement~\eqref{cohomology vanishing} is equivalent to 
\beq
\label{b wedge statement}
 \oh^{i+j}(X, \wedge^j(\Lb^{\oplus r}))=0\text{ for all }i\geq 1 \,.
\eeq
This statement holds for any particular $r$. 

On the other hand, using Lemma \ref{lem-sts}, we see that  the vanishing statement~\eqref{cohomology vanishing} for all $r$ is equivalent to 
\beqn
\oh^i(X, (\Lg/\Lb)^{\otimes r})=0\text{ for all }i,r\geq 1.
\eeqn
Similarly, using Lemma~\ref{lem-stw} we see that the vanishing statement~\eqref{b wedge statement} for all $r$ is equivalent to
\beqn
\oh^i(X, \Lb^{\otimes r})=0\text{ for all }r\geq 1,\ i\geq r+1.
\eeqn

The short exact sequence 
\beqn
0\to\Lb \to\Lg \to\Lg/\Lb \to 0
\eeqn
will  give us further information. If we consider it as a two-step complex $\Lb \to\Lg$ with cohomology $\Lg/\Lb$ and pass to the associated  tensor complexes, we obtain
\beqn
\dots (\Lg^{\otimes r-q}\otimes \Lb^{\otimes q})^{\oplus {\binom{r}{ q}}}\to\dots \to \Lg^{\otimes r}\to (\Lg/\Lb)^{\otimes r}\to0.
\eeqn
Passing to the associated spectral sequence we obtain
\beq
\label{spectral sequence}
E_1^{p,-q} =  (\Lg^{\otimes r-q})^{\oplus {\binom{r}{ q}}}\otimes\oh^p(X,\Lb^{\otimes q}) \ \ \Longrightarrow
\oh^{p-q}(X, (\Lg/\Lb)^{\otimes r})\,.
\eeq
From this spectral sequence we conclude that the condition
\beq
\label{condition}
\oh^i(X, \Lb^{\otimes r})=0\text{ for all }\ i \geq r.
\eeq 
implies
\beqn
\Lg^{\otimes r} \to \oh^0(X,(\Lg/\Lb)^{\otimes r}) \qquad\text{is a surjection}\,.
\eeqn
Arguing as in Lemma~\ref{lem-sts} by decomposing tensors we conclude
\beqn
\text{if condition~\eqref{condition} holds for all $r$ then  $\Sym(\Lg^{\oplus r}) \to \oh^0(X,\Sym((\Lg/\Lb)^{\oplus r}))$ is onto for all $r$}.
\eeqn
Combining this with the previous discussion we obtain
\begin{thm} 
\label{criterion}
If condition~\eqref{condition} is satisfied for all $r$,  then $\cA_r$ has rational singularities for all values $r$. 
\end{thm}

\section{Cohomology of the vector bundles $\Lb^{\otimes r}$ for type $A_n$}

In the previous section we reduced the question of normality and the question of rational singularities of $\cA_r$ to the study of the cohomology of $\Lb^{\otimes r}$. 
We will now calculate the cohomology of these vector bundles  for small $r$ in type $A_{n-1}$, i.e., for $\Lg=\mathfrak{sl}_n$. We will go sufficiently far to obtain our counterexamples, but will not make an attempt for a complete general answer.

\subsection{The case $r=1$ and $\Lg$ of any type}\label{1}

This case is of course well known, easy and applies to any $\Lg$, but we include the details in any case. 

We will show that:
\beq\label{eqn-van1}
\oh^k(X,\Lb)=0\text{ for all }k.
\eeq
Consider the short exact sequence
\beqn
0\to\Ln\to\Lb\to\Lb/\Ln\to 0.
\eeqn
Since $\Lb/\Ln$ is a trivial vector bundle, $\oh^k(X,\Lb/\Ln)=0$ for all $k\geq 1$. As by~\eqref{Hodge} we have $\oh^k(X,\Ln)=0$ for $k\neq 1$, it follows that
\beqn
\oh^k(X,\Lb)=0\text{ for }k\geq 2.
\eeqn
From the short exact sequence
$$0\to\Lb\to\Lg\to\Lg/\Lb\to 0$$
we obtain the exact sequence
$$
0\to \oh^0(X,\Lb)\to \oh^0(X,\Lg)\to \oh^0(X,\Lg/\Lb)\to \oh^1(X,\Lb)\to 0.
$$
Moreover, $\oh^0(X,\Lg)\cong\Lg$ and the map $\Lg \to\oh^0(X,\Lg/\Lb)$ is an isomorphism as can be seen by a direct verification, for example. Thus we obtain~\eqref{eqn-van1}.

\subsection{The case $r=2$ and $\Lg$ of type $A_{n-1}$}\label{2}
 Let $\Lg=\mathfrak{sl}_n$, $n\geq 3$. We will show that
\beq\label{eqn-t1}
\oh^k(X,\Lb\ot\Lb)=\left\{\begin{array}{ll}0&\text{ if } k\neq 1\\\tk&\text{ if }k=1.\end{array}\right.
\eeq
and
\beq\label{eqn-tt3}
\oh^k(X,(\Lg/\Lb)^{\ot2})\cong \left\{\begin{array}{ll}(\Lg\ot\Lg)/\tk&\text{ if }k=0\\0&\text{ if } k\geq 1.\end{array}\right.
\eeq

First, we claim that 
\beq\label{eqn-tt1}
\text{the potential support of }\oh^k(X,\Lb\ot\Lb)\text{ is }\{0\}\text{ for all }k\geq 0.
\eeq

Let $\chi\in\chi(\Lb\ot\Lb)$. We will show that
$$
 \text{either }\oh^k(X,\cL_\chi)=0\text{ for all }k\geq 0\text{ or }\chi=w\cdot0\text{ for some }w\in W. 
 $$
 Recall that the bundle associate to $\Lh=\Lb/\Ln$ is trivial and thus, by making use of Lemma~\ref{distinct roots}, it suffices to prove the result for  $\chi\in\chi(\Ln\ot\Ln)$. The case when $\chi$ is a sum of distinct negative roots follows from~\eqref{BWB} and Lemma~\ref{distinct roots}. So it remains to consider the case when $\chi\in\chi(\Ln\ot\Ln)$ and  $\chi$ is not a sum of distinct negative roots. In that case $\chi=-2\alpha$ for a positive root $\alpha\in \Phi^+$ and $\text{ht}(\alpha)=1$ (if $\text{ht}(\alpha)>1$ then $\chi$ is a sum of distinct negative roots). Let us denote the set of simple roots with respect to $\Phi^+$ by $\Delta:=\{\alpha_i,i=1,\ldots,n-1\}$.
  Let $\alpha=\alpha_i$, $1\leq i\leq n-1$. If $i\leq n-2$, then $\la-2\alpha_i+\rho,(\alpha_i+\alpha_{i+1})^\vee\ra=0$; if $i\geq2$, then $\la-2\alpha_i+\rho,(\alpha_{i-1}+\alpha_{i})^\vee\ra=0$; thus $\oh^k(X,\cL_\chi)=0$ for all $k$. This finishes the proof of \eqref{eqn-tt1}.

Using the short exact sequence
\beqn
0\to \Lb\otimes\Lb\to\Lg\ot\Lb\to\Lg/\Lb\ot\Lb\to 0.
\eeqn
and 
$\oh^k(X,\Lg\ot\Lb)\cong\Lg\ot \oh^k(X,\Lb)=0$ for all $k$ (we make use of \eqref{eqn-van1}), we see that
\beqn
\oh^0(X,\Lb\ot\Lb)=0\text{ and }\oh^k(X,\Lb\ot\Lb)\cong \oh^{k-1}(X,\Lb\ot\Lg/\Lb)\text{ as }G\text{-modules for } k\geq 1.
\eeqn
Using the short exact sequence
\beqn
0\to \Lb\otimes\Lg/\Lb\to\Lg\ot\Lg/\Lb\to(\Lg/\Lb)^{\ot 2}\to 0
\eeqn
and $\oh^k(X,\Lg\otimes\Lg/\Lb)\cong\Lg\otimes \oh^k(X,\Lg/\Lb)=0$ for $k\geq 1$, we see that
\beqn
\oh^{k}(X,\Lb\ot\Lg/\Lb)\cong \oh^{k-1}(X,(\Lg/\Lb)^{\ot2}) \text{ as }G\text{-modules for }k\geq 2
\eeqn
and that we have an exact sequence of $G$-modules
\beqn
0\to \oh^0(X,\Lb\ot\Lg/\Lb)\to\Lg\ot\Lg\to \oh^0(X,(\Lg/\Lb)^{\ot2})\to \oh^1(X,\Lb\ot\Lg/\Lb)\to 0.
\eeqn
It follows that we have an isomorphism of $G$-modules
\begin{equation}\label{eqn-l3}
\oh^k(X,\Lb\ot\Lb)\cong \oh^{k-2}(X,(\Lg/\Lb)^{\ot2})\text{ for } k\geq 3
\end{equation}
and an exact sequence of $G$-modules
\beq\label{eqn-ll1}
0\to \oh^1(X,\Lb\otimes\Lb)\to\Lg\ot \Lg\to \oh^0(X,(\Lg/\Lb)^{\ot 2})\to \oh^2(X,\Lb\ot\Lb)\to 0.
\eeq

As $0$ is clearly not in the potential support of $\oh^k(X,(\Lg/\Lb)^{\ot2})$, it follows from \eqref{eqn-tt1}, \eqref{eqn-l3} and \eqref{eqn-ll1} that $\oh^k(X,\Lb\ot\Lb)=0$ for $k\geq 2$, and
\beqn
\oh^1(X,\Lb\ot\Lb)\cong\Hom_G(\tk,\Lg\ot\Lg)\cong\Hom_G(\Lg,\Lg)\cong\tk.
\eeqn
This completes the proof of \eqref{eqn-t1}. It also follows that
\beq\label{eqn-tt2}
\oh^k(X,\Lb\ot\Lg/\Lb)\cong\left\{\begin{array}{ll}\tk&\text{ if }k=0\\0&\text{ if } k\geq 1\end{array}\right.
\eeq
Finally, we conclude~\eqref{eqn-tt3} from \eqref{eqn-t1},  \eqref{eqn-l3}, and \eqref{eqn-ll1}.

\subsection{The case $r=3$ and $\Lg$ of type $A_{n-1}$} \label{3}
Let $\Lg=\mathfrak{sl}_n$, $n\geq 3$. We will show that
\begin{eqnarray}\label{eqn-tb1}
&&\oh^k(X,\Lb^{\ot 3})=0,\ k\leq 1\text{ or }{k\geq 4};\nonumber\\
&&
\oh^2(X,\Lb^{\ot 3})=\left\{\begin{array}{ll}\tk^{\oplus 2}\oplus L(\alpha_1+\alpha_2)^{\oplus 5}\oplus L(2\alpha_1+\alpha_2)\oplus L(\alpha_1+2\alpha_2)&\text{ if }n=3\\\tk^{\oplus 2}&\text{ if }n\geq 4;\end{array}\right.
\\
&&
\oh^3(X,\Lb^{\ot 3})=\left\{\begin{array}{ll}L(\alpha_1+2\alpha_2+\alpha_3)&\text{ if }n=4\\0&\text{ if }n\neq 4.\end{array}\right.\nonumber
\end{eqnarray}

We first show that
\beq
\label{eqn-pp3}
\begin{aligned}
 &\text{if }  n\geq 5   &&\PSupp(\oh^k(X,\Lb^{\ot 3}))=\{0\}\text{ for all }k, 
\\
&  \text{if $n=4$}  &&\PSupp(\oh^k(X,\Lb^{\ot 3}))=\left\{\begin{array}{ll}
\{0,\alpha_1+2\alpha_2+\alpha_3\}&\text{ if }k=3\\\{0\}&\text{ if }k\neq3,\end{array}\right.
\\
 &  \text{if $n=3$}  &&\PSupp(\oh^k(X,\Lb^{\ot3}))=\left\{\begin{array}{ll}
\{0,\alpha_1+\alpha_2,2\alpha_1+\alpha_2,\alpha_1+2\alpha_2\}&\text{ if }k=2\\
\{0,\alpha_1+\alpha_2\}&\text{ if }k=3\\\{0\}&\text{ if }k\neq 2,3.\end{array}\right.
\end{aligned}
\eeq

Let $\chi\in\chi(\Lb^{\ot 3})$. Recall that the bundle associate to $\Lh=\Lb/\Ln$ is trivial and thus, by making use of~\eqref{eqn-tt1}, it suffices to prove the result for  $\chi\in\chi(\Ln^{\ot 3})$. If $\chi\in\chi(\wedge^3\Ln)$, we see by \eqref{BWB} and Lemma~\ref{distinct roots} that either $H^k(X,\cL_\chi)=0$ for all $k$ or $\chi=w\cdot0$ for some $w\in W$. It remains to consider $\chi$ of the form $-2\alpha-\beta$, $\alpha,\beta\in \Phi^+$.

Let us introduce some notation. For $w\in W$, denote 
\beqn
\Phi_w:=\{\gamma\in \Phi^+\,|\,w^{-1}\gamma\in \Phi^-\}.
\eeqn
Suppose that $w=s_{i_1}s_{i_2}\cdots s_{i_l}$, where $s_{i}=s_{\alpha_i}$, is a reduced expression. Recall that we have
\beqn
\Phi_w=\{\alpha_{i_1},\ s_{i_1}(\alpha_{i_2}),\  s_{i_1}s_{i_2}(\alpha_{i_3}),\ \cdots,\ s_{i_1}\cdots s_{i_{l-1}}(\alpha_{i_l})\}.
\eeqn
In particular, the cardinality of $\Phi_w$ is the length $\ell(w)$.

For $\lambda\in X(T)$, we have
\beq
\label{dot}
w\cdot\lambda=w(\lambda)-\sum_{\gamma\in\Phi_w}\gamma.
\eeq

Suppose first that $\alpha\neq \beta$. We show that if there exists $w\in W$ such that $w\cdot(-2\alpha-\beta)\in X^+(T)$, then $w\cdot(-2\alpha-\beta)=0$ unless $n=3$ and $\chi=-3\alpha_1-\alpha_2$ or $-\alpha_1-3\alpha_2$.

As the dominant Weyl chamber is contained in the positive root cone, i.e., the inverse of the Cartan matrix has positive entries, we have
\beq
\begin{gathered}
\label{Cartan}
\text{Let $\lambda \in \bZ\Phi$ be an element in the root lattice which is}
\\
\text{dominant and not zero then $\lambda\in \bZ_{> 0}\al_1 + \dots +\bZ_{>0}\al_{n-1}$}\,.
\end{gathered} 
\eeq

Assume that $w\cdot(-2\alpha-\beta)$ is dominant and not equal to zero.  Making use of~\eqref{dot} and~\eqref{Cartan} we see that 
\beqn
w\cdot(-2\alpha-\beta) = -2w\al - w\beta - \sum_{\gamma\in\Phi_w}\gamma \in \bZ_{> 0}\al_1 + \dots +\bZ_{>0}\al_{n-1}\,.
\eeqn
Let us write $\alpha_0=\al_1+\dots \al_{n-1}$ for the highest root and then we can rephrase the above equality as 
\beqn
-2w\al - w\beta - \sum_{\gamma\in\Phi_w}\gamma \geq \al_0\,.
\eeqn
Clearly at least one of $w\al$ or $w\beta$ has to be negative. Note that if $w\beta$ is negative then $-w\beta\in\Phi_w$ and similarly for $w\al$.  Thus, if $w\al$ is negative and $w\beta$ is not we get 
\beqn
-w\al - w\beta  - \sum_{\gamma\in\Phi_w-\{-w\al\}}\gamma \geq \al_0\,.
\eeqn
But this is impossible as $-w\al \leq\al_0$. Similarly we see that it is impossible for  $w\al$ to be positive and for $w\beta$ to be negative. Hence, both $w\al$ and $w\beta$ have to be negative. In this case we see that
\beqn
-w\al  - \sum_{\gamma\in\Phi_w-\{-w\al,-w\beta\}}\gamma \geq \al_0\,.
\eeqn
This is only possble if  $w\alpha=-\alpha_0$, $\Phi_w=\{\al_0,-w\beta\}$, and then also $\ell(w)=2$. One sees directly that $w\al_0$ can be negative for $\ell(w)=2$ only when $n=3$. In that case the only possibilities are:
\beqn
-2\alpha_1-(\alpha_1+\alpha_2)=(s_1s_2)\cdot(\alpha_1+\alpha_2),\quad-2\alpha_2-(\alpha_1+\alpha_2)=(s_2s_1)\cdot(\alpha_1+\alpha_2)\,.
\eeqn
Thus we conclude that in this case the only possibility for potential support, in addition to $0$,   is  $\alpha_1+\alpha_2\in \PSupp\oh^2(X,\Lb^{\ot3})$ when $n=3$.

Suppose now that $\alpha=\beta$ and thus $\chi=-3\alpha$. If $\text{ht}\,\alpha\geq 3$, then $\chi$ is a sum of distinct negative roots  and we see  by~\eqref{BWB} and Lemma~\ref{distinct roots} that either $H^k(X,\cL_\chi)=0$ for all $k$ or $\chi=w\cdot0$ for some $w\in W$. Let us  assume next  that $\text{ht}\,\alpha= 2$ and we write $\alpha=\alpha_i+\alpha_{i+1}$, $1\leq i\leq n-2$. If $i\leq n-3$, then $\la-3\alpha+\rho,(\alpha_i+\alpha_{i+1}+\alpha_{i+2})^\vee\ra=0$ and if $i\geq 2$, then $\la-3\alpha+\rho,(\alpha_{i-1}+\alpha_{i}+\alpha_{i+1})^\vee\ra=0$. Thus in these cases  $\oh^k(X,\cL_\chi)=0$ for all $k$ and they do not contribute to the potential support. Therefore we are left to consider the case when $n=3$ and $i=1$. Then
\beqn
-3(\alpha_1+\alpha_2)=(s_1s_2s_1)\cdot(\alpha_1+\alpha_2).
\eeqn
Hence $\alpha_1+\alpha_2\in \PSupp\oh^3(X,\Lb^{\ot3})$ when $n=3$. 

Finally, let us  assume that $\text{ht}\,\alpha= 1$ and so  $\alpha=\alpha_i$, $1\leq i\leq n-1$. If $i\leq n-3$, we have $\la-3\alpha+\rho,(\alpha_i+\alpha_{i+1}+\alpha_{i+2})^\vee\ra=0$; if $i\geq 3$, we have $\la-3\alpha+\rho,(\alpha_{i-2}+\alpha_{i-1}+\alpha_{i})^\vee\ra=0$. This shows that $\oh^k(X,\cL_\chi)=0$ for all $k$. Thus we get no contribution to the potential support in these cases. This leaves us with the possibilities of  $n=3$ with $i=1,2$ and $n=4$ with $i=2$. In these cases, we have
\beqn
-3\alpha_1=(s_1s_2)\cdot(2\alpha_1+\alpha_2),\ \ -3\alpha_2=(s_2s_1)\cdot(\alpha_1+2\alpha_2) \ \ (n=3)
\eeqn
\beqn
-3\alpha_2=(s_2s_1s_3)\cdot(\alpha_1+2\alpha_2+\alpha_3)\ \ (n=4).
\eeqn 
Thus we obtain $2\alpha_1+\alpha_2, \alpha_1+2\alpha_2\in \PSupp\oh^2(X,\Lb^{\ot3})$ when $n=3$ and $\alpha_1+2\alpha_2+\alpha_3\in \PSupp\oh^3(X,\Lb^{\ot3})$ when $n=4$.

This completes the proof of \eqref{eqn-pp3} and we now turn to the proof of~\eqref{eqn-tb1}. 

Using the short exact sequence
\beqn
0\to\Lb^{\ot3}\to\Lg\ot\Lb^{\ot2}\to\Lb^{\ot2}\ot\Lg/\Lb\to 0
\eeqn
and \eqref{eqn-t1} we see that we have
\beqn
\oh^0(X,\Lb^{\ot 3})=0,\ \oh^k(X,\Lb^{\ot 3})\cong \oh^{k-1}(X,\Lb^{\ot2}\ot\Lg/\Lb)\text{ for } k\geq 3,
\eeqn
and also an exact sequence
\beq\label{eqn-pp4}
0\to \oh^0(X,\Lb^{\ot2}\ot\Lg/\Lb)\to \oh^1(X,\Lb^{\ot3})\to\Lg\to \oh^1(X,\Lb^{\ot2}\ot\Lg/\Lb)\to \oh^2(X,\Lb^{\ot 3})\to 0.
\eeq
Using the short exact sequence
\beqn
0\to\Lb^{\ot2}\ot\Lg/\Lb\to\Lg\ot\Lb\ot\Lg/\Lb\to\Lb\ot(\Lg/\Lb)^{\ot2}\to 0
\eeqn
and \eqref{eqn-tt2} we obtain
\beqn
\oh^k(X,\Lb^{\ot 2}\ot\Lg/\Lb)\cong \oh^{k-1}(X,\Lb\ot(\Lg/\Lb)^{\ot2})\text{ for}\ k\geq 2
\eeqn
and an exact sequence
\beq\label{eqn-pp5}
0\to \oh^0(X,\Lb^{\ot2}\ot\Lg/\Lb)\to \Lg\to \oh^0(X,\Lb\ot(\Lg/\Lb)^{\ot2})\to \oh^1(X,\Lb^{\ot 2}\ot\Lg/\Lb)\to 0.
\eeq
Using the short exact sequence
\beqn
0\to\Lb\ot(\Lg/\Lb)^{\ot2}\to\Lg\ot(\Lg/\Lb)^{\ot2}\to(\Lg/\Lb)^{\ot 3}\to 0
\eeqn
and \eqref{eqn-tt3} we obtain
\beqn
\oh^k(X,\Lb\ot(\Lg/\Lb)^{\ot2})\cong \oh^{k-1}(X,(\Lg/\Lb)^{\ot 3})\text{ for } k\geq 2
\eeqn
and an exact sequence
\beqn
0\to \oh^0(X,\Lb\ot(\Lg/\Lb)^{\ot2})\to \Lg\ot \oh^0(X,(\Lg/\Lb)^{\ot2})\to \oh^0(X,\Lg/\Lb^{\ot3})\to \oh^1(X,\Lb\ot(\Lg/\Lb)^{\ot2})\to 0.
\eeqn

It follows that we have an isomorphism of $G$-modules
\beqn
\oh^k(X,\Lb^{\ot 3})\cong \oh^{k-3}(X,(\Lg/\Lb)^{\ot 3})\text{ for } k\geq 4,
\eeqn
and an exact sequence of $G$-modules
\beq\label{eqn-pp2}
0\to \oh^0(X,\Lb\ot(\Lg/\Lb)^{\ot2})\to \Lg\ot \oh^0(X,(\Lg/\Lb)^{\ot2})\to \oh^0(X,(\Lg/\Lb)^{\ot3})\to \oh^3(X,\Lb^{\ot3})\to 0.
\eeq
Thus, as  $0$ is not in the potential support of $\oh^k(X,(\Lg/\Lb)^{\ot3})$, we conclude that
\beq
\label{h3}
 \oh^k(X,\Lb^{\ot3})\ \ \  \text{does not contain the trivial representation for $k\geq 3$}\,.
\eeq
Thus, we conclude from~\eqref{eqn-pp3} that $\oh^k(X,\Lb^{\ot3})=0$ for $k\geq 4$.

Also, it follows from  \eqref{eqn-pp3}, \eqref{eqn-pp4}, and \eqref{eqn-pp5} that
\beq\label{eqn-van2}
\oh^0(X,\Lb^{\ot2}\ot\Lg/\Lb)\cong \oh^1(X,\Lb^{\ot3})=0.
\eeq
Thus, we have shown, in particular, that $\oh^k(X,\Lb^{\ot3})=0$ for $k=0,1$. Thus we have obtained the first claim of~\eqref{eqn-tb1}. Before proceeding further, we record one more general fact which we obtain from \eqref{eqn-van2}, \eqref{eqn-pp4}, and \eqref{eqn-pp5}:
\begin{subequations}
\label{auxiliary}
\beq
\label{auxiliarya}
 \oh^{0}(X,\Lb\ot(\Lg/\Lb)^{\ot 2}) \cong \oh^1(X,\Lb^{\ot 2}\ot\Lg/\Lb)\oplus\Lg\cong\oh^2(X,\Lb^{\ot 3})\oplus\Lg^{\oplus 2}
\eeq
\beq
\oh^{1}(X,\Lb\ot(\Lg/\Lb)^{\ot 2})\cong\oh^2(X,\Lb^{\ot 2}\ot\Lg/\Lb)\cong  \oh^3(X,\Lb^{\ot 3})
\eeq
\beq
\oh^{k-1}(X,\Lb\ot(\Lg/\Lb)^{\ot 2})\cong \oh^k(X,\Lb^{\ot 2}\ot\Lg/\Lb)= 0  \text{ if }k\neq 1,2.
\eeq
\end{subequations}

We now argue with specific values of $n$.

Assume that $n\geq 5$.  It follows from~\eqref{h3}
and \eqref{eqn-pp3} that 
\beqn
\oh^3(X,\Lb^{\ot 3})=0 \ \ \text{and}\ \ \oh^2(X,\Lb^{\ot 3})\cong\tk^{\oplus c}\,,
\eeqn
for some $c$  which we determine later. 

Assume that $n=4$. It follows from \eqref{h3} and \eqref{eqn-pp3}  that 
\beqn
\oh^3(X,\Lb^{\ot 3})\cong L(\alpha_1+2\alpha_2+\alpha_3)  \ \ \text{and}\ \  \oh^2(X,\Lb^{\ot 3})\cong\tk^{\oplus a}\,,
\eeqn
for some $a$ which we determine later. 

Assume that $n=3$. As $\alpha_1+\alpha_2$ is not in the potential support of $\oh^0(X,(\Lg/\Lb)^{\ot3})$ we conclude from~\eqref{eqn-pp3}, \eqref{eqn-pp2} and~\eqref{h3} that 
\beqn
\oh^3(X,\Lb^{\ot3})=0.
\eeqn
Thus $\oh^2(X,\Lb^{\ot 3})$ is the only non-vanishing cohomology group in this case. Making use of Borel-Weil-Bott \eqref{BWB} just as in our argument for \eqref{eqn-pp3} we see that 
\beqn
\oh^*(X,\gr(\Lb^{\ot 3}))\cong \tk^{\oplus b}+ L(\alpha_1+\alpha_2)^{\oplus 5}+L(2\alpha_1+\alpha_2)+L(\alpha_1+2\alpha_2)\,.
\eeqn
for some $b$. 
Therefore we get:
\beqn
\oh^2(X,\Lb^{\ot 3})\cong\oh^*(X,\gr(\Lb^{\ot 3}))\cong \tk^{\oplus b}\oplus L(\alpha_1+\alpha_2)^{\oplus 5}\oplus L(2\alpha_1+\alpha_2)\oplus L(\alpha_1+2\alpha_2).
\eeqn
We now determine $a$, $b$, and $c$. Now, 
\beqn
\tk^{\oplus a}\cong\tk^{\oplus b}\cong\tk^{\oplus c}\cong\Hom_G(\tk,\oh^2(X,\Lb^{\ot 3}))\,.
\eeqn
By~\eqref{auxiliarya} we get 
\beqn
\begin{gathered}
\Hom_G(\tk,\oh^2(X,\Lb^{\ot 3})) \cong \Hom_G(\tk,\oh^2(X,\Lb^{\ot 3})\oplus\Lg^{\oplus 2})\cong  \Hom_G(\tk,\oh^{0}(X,\Lb\ot(\Lg/\Lb)^{\ot 2}))\cong
\\
\cong \Hom_G(\tk,\Lg\ot \oh^0(X,(\Lg/\Lb)^{\ot2}))\cong (\Lg\ot\Lg\ot\Lg)^G\cong\tk^{\oplus 2}\,;
\end{gathered}
\eeqn
where in the third equality we have made use of~\eqref{eqn-pp2} and the fact that the trivial representation does not occur in $\oh^0(X,(\Lg/\Lb)^{\ot3})$,  in the fourth equality we made use of~\eqref{eqn-tt3}. The last equality is classical and can also be verified by a direct calculation: the two invariant tensors are $(x,y,z)\mapsto \operatorname{Tr}(xyz)$ and $(x,y,z)\mapsto \operatorname{Tr}(yxz)$.
This completes the proof of \eqref{eqn-tb1}.

\subsection{The case $r=4$ and $\Lg$ of type $A_{n-1}$}Let $\Lg=\mathfrak{sl}_n$, $n\geq 6$. 

In the previous cases we obtained complete information of the cohomology for all values of $n\geq3$. For $r=4$ we will not make an attempt to get a complete answer, but will just obtain enough information for our counterexample. In particular, we already have enough information to prove the Cohen-Macaulay property for $\mathfrak{sl}_3$. In the cases $n=4,5$ the answer is probably obtainable with our techniques but is more complicated. 

We will show that if $n\geq 7$, then
\begin{subequations}
\label{eqn-sub}
\beq
\oh^k(X,\Lb^{\ot 4})=
0\text{ if }k\neq 2,3
\eeq
and if $n=6$, then
\beq
\oh^k(X,\Lb^{\ot 4})=\left\{\begin{array}{ll}
L(\alpha_1+2\alpha_2+3\alpha_3+2\alpha_4+\alpha_5)&\text{ if }k=5\\0&\text{ if }k\neq 2,3,5\,. \end{array}\right.
\eeq
\end{subequations}
\begin{rmk}
The $\oh^2(X,\Lb^{\ot 4})$ and  $\oh^3(X,\Lb^{\ot 4})$ both consist of a number of copies of the trivial representation. 
\end{rmk}

We first show that
\begin{subequations}\label{eqn-pt1}
\beq
\text{ if }n\geq 7\ \ \  \PSupp\oh^k(X,\Lb^{\ot 4})=\{0\}\text{ for all }k\geq 0; 
\eeq
and
\beq
\text{ if } n=6 \ \  \PSupp\oh^k(X,\Lb^{\ot 4}) =\left\{\begin{array}{ll}\{0\}&\text{ if }k\neq 5\\\{0,\alpha_1+2\alpha_2+3\alpha_3+2\alpha_4+\alpha_5\}&\text{ if }k=5\,.\end{array}\right.
\eeq
\end{subequations}

 Let $\chi\in\chi(\Lb^{\ot 4})$. Recall that the bundle associate to $\Lh=\Lb/\Ln$ is trivial. Thus, by making use of~\eqref{eqn-pp3}, it suffices to prove the result for $\chi\in\chi(\Ln^{\ot 4})$. If $\chi\in\chi(\wedge^4\Ln)$, we see by~\eqref{BWB} and Lemma~\ref{distinct roots} that either $H^k(X,\cL_\chi)=0$ for all $k$ or $\chi=w\cdot0$ for some $w\in W$. Thus we are reduced to considering the case when $\chi$ is not a sum of distinct roots. 

Assume first that $\chi=-2\alpha-\beta-\gamma$, where $\alpha,\beta,\gamma\in \Phi^+$ are distinct. We will make use of the notation and argue in the similar manner as in the previous subsection~\ref{3} . Suppose that there exists a $w\in W$, such that  $w\cdot\chi\in X^+(T)$ and $w\cdot\chi\neq 0$. Making use of~\eqref{dot} and~\eqref{Cartan} we conclude, as in subsection~\ref{3}, that  
\beqn
w\cdot(-2\alpha-\beta-\gamma) = w(-2\alpha-\beta-\gamma)  - \sum_{\gamma\in\Phi_w}\gamma \geq \al_0\,.
\eeqn
Further arguing as in subsection~\ref{3} we conclude that $w$ has to satisfy that $w\alpha=-\alpha_0,w\beta<0,w\gamma<0$ and $\ell(w)=3$. But,  $\alpha_0\in \Phi_w$ only if $l(w)\geq n-1$ and we have assumed that $n\geq 6$. 

Assume next that $\chi=-2\alpha-2\beta$, where $\alpha,\beta\in \Phi^+$ are distinct. Suppose that there exists a $w\in W$, such that  $w\cdot\chi\in X^+(T)$ and $w\cdot\chi\neq 0$. Making use of~\eqref{Cartan} again we see  that 
\beqn
w(-2\alpha-2\beta+\rho)-\rho= \al_0+\lambda_0\,
\eeqn
for some $\lambda_0\in\bZ_+\Phi^+$.
In particular, we have
\begin{eqnarray*}
&&\la-2\alpha-2\beta+\rho,-2\alpha-2\beta+\rho\ra=\la w(-2\alpha-2\beta+\rho),w(-2\alpha-2\beta+\rho)\ra
\\
&&=\la\alpha_0+\lambda_0+\rho,\alpha_0+\lambda_0+\rho\ra\geq\la\alpha_0+\rho,\alpha_0+\rho\ra=2n+\la\rho,\rho\ra.
\end{eqnarray*}
It follows that
\beqn
\la\alpha,\alpha\ra+\la\beta,\beta\ra+2\la\alpha,\beta\ra-\la\alpha+\beta,\rho\ra\geq n/2.
\eeqn
This can only happen when 
\beqn
n=6,\ \la\alpha,\beta\ra=1, \la\alpha,\rho\ra=2, \ \la\beta,\rho\ra=1.
\eeqn
Here we can and assume that $\text{ht}\alpha=\la\alpha,\rho\ra\geq\text{ht}\beta=\la\beta,\rho\ra$. 

Suppose that $n=6$ and $\beta=\alpha_i$, $1\leq i\leq 5$. Then $\alpha=\alpha_i+\alpha_{i+1}$ or $\alpha=\alpha_{i-1}+\alpha_i$. In the first case we have that $-2\alpha-2\beta=-4\alpha_i-2\alpha_{i+1}$, and
\beqn
\left\{\begin{array}{ll}\la-4\alpha_i-2\alpha_{i+1}+\rho,(\alpha_{i-1}+\alpha_i)^\vee\ra=0&\text{ if }i\geq 2
\\
\la-4\alpha_i-2\alpha_{i+1}+\rho,(\sum_{j=i}^{i+3}\alpha_{j})^\vee\ra=0&\text{ if }i\leq 2.\end{array}\right.
\eeqn
In the second case we have that $-2\alpha-2\beta=-2\alpha_{i-1}-4\alpha_{i}$ and
\beqn
\left\{\begin{array}{ll}\la-2\alpha_{i-1}-4\alpha_{i}+\rho,(\alpha_{i}+\alpha_{i+1})^\vee\ra=0&\text{ if }i\leq 4
\\
\la-2\alpha_{i-1}-4\alpha_{i}+\rho,(\sum_{j=i-3}^{i}\alpha_{j})^\vee\ra=0&\text{ if }i\geq 4.\end{array}\right.
\eeqn
It follows that in these cases $-2\alpha-2\beta$ does not contribute to the potential support as $H^k(X,\cL_{-2\alpha-2\beta})=0$ for all $k$.

Assume that $\chi=-3\alpha-\beta$, where $\alpha,\beta\in \Phi^+$ are distinct roots. Suppose that there exists a $w\in W$, such that  $w\cdot\chi\in X^+(T)$ and $w\cdot\chi\neq 0$. Argue as above, we have
\beqn
9\la\alpha,\alpha\ra+\la\beta,\beta\ra+6\la\alpha,\beta\ra-6\la\alpha,\rho\ra-2\la\beta,\rho\ra\geq 2n.
\eeqn
This can happen only if $n\leq 8$, $\la\alpha,\beta\ra=1$, $3\la\alpha,\rho\ra+\la\beta,\rho\ra\leq 13-n$, or if $n=6$, $\la\alpha,\beta\ra=0$, and $\la\alpha,\rho\ra=\la\beta,\rho\ra=1$. More precisely, if $n=6$ and $\la\alpha,\beta\ra=1$, then the possible values for $(\la\alpha,\rho\ra,\la\beta,\rho\ra)$ are $(1,2),(1,3),(1,4),(2,1)$; if $n=7$ and $\la\alpha,\beta\ra=1$, then the possible values for $(\la\alpha,\rho\ra,\la\beta,\rho\ra)$ are $(1,2),(1,3)$; and if $n=8$ and $\la\alpha,\beta\ra=1$, then $(\la\alpha,\rho\ra,\la\beta,\rho\ra)=(1,2)$.

Suppose first that $\la\alpha,\beta\ra=1$ and $\alpha=\alpha_i$, $1\leq i\leq n-1$. Then $\beta=\sum_{k=i}^{j}\alpha_k$ for some $j\geq i+1$ or $\beta=\sum_{k=j}^{i}\alpha_k$ for some $j\leq i-1$. In the first case we have $-3\alpha-\beta=-4\alpha_i-\sum_{k=i+1}^{j}\alpha_k$ and
\beqn
\left.\begin{array}{lll}\la-3\alpha-\beta+\rho,\alpha_j^\vee\ra=0&\text{ if }j\neq i+1\\
\la-3\alpha-\beta+\rho,(\sum_{k=i}^{i+3}\alpha_k)^\vee\ra=0&\text{ if }j= i+1\text{ and }i\leq n-4\\
\la-3\alpha-\beta+\rho,(\sum_{k=i-2}^{i}\alpha_k)^\vee\ra=0&\text{ if }j= i+1\text{ and }i\geq 3;\end{array}\right.
\eeqn
in the second case we have $-3\alpha-\beta=-\sum_{k=j}^{i-1}\alpha_k-4\alpha_i$ and
\beqn
\left.\begin{array}{lll}\la-3\alpha-\beta+\rho,\alpha_j^\vee\ra=0&\text{ if }j\neq i-1\\
\la-3\alpha-\beta+\rho,(\sum_{k=i}^{i+2}\alpha_k)^\vee\ra=0&\text{ if }j= i-1\text{ and }i\leq n-3\\
\la-3\alpha-\beta+\rho,(\sum_{k=i-3}^{i}\alpha_k)^\vee\ra=0&\text{ if }j= i-1\text{ and }i\geq 4.\end{array}\right.
\eeqn
Thus in all these cases $-3\alpha-\beta$ dose not contribute to the potential support.

Suppose next that $n=6$, $\la\alpha,\beta\ra=1$, $\la\alpha,\rho\ra=2$ and $\la\beta,\rho\ra=1$. Let us write $\beta=\alpha_i$. Then $\alpha=\alpha_i+\alpha_{i+1}$ or $\alpha=\alpha_{i-1}+\alpha_i$. We have
\beqn
\left.\begin{array}{ll}\la-4\alpha_i-3\alpha_{i+1}+\rho,(\sum_{k=i-1}^{i+1}\alpha_k)^\vee\ra=0&\text{ if }i\geq 2\\\la-4\alpha_i-3\alpha_{i+1}+\rho,(\sum_{k=i}^{i+3}\alpha_k)^\vee\ra=0&\text{ if }i\leq 2\end{array}\right.
\eeqn
or
\beqn
\left.\begin{array}{ll}\la-3\alpha_{i-1}-4\alpha_{i}+\rho,(\sum_{k=i-1}^{i+1}\alpha_k)^\vee\ra=0&\text{ if }i\leq 4\\\la-3\alpha_{i-1}-4\alpha_{i}+\rho,(\sum_{k=i-3}^{i}\alpha_k)^\vee\ra=0&\text{ if }i\geq 4.\end{array}\right.
\eeqn
Thus again in this case $-3\alpha-\beta$ dose not contribute to the potential support.

Suppose now that $n=6$, $\la\alpha,\beta\ra=0$, $\la\alpha,\rho\ra=\la\beta,\rho\ra=1$. Let us write $\alpha=\alpha_i$ and $\beta=\alpha_j$. Then either $j\geq i+2$ or $j\leq i-2$. We have
\beqn
\left.
\begin{array}{ll}
\la-3\alpha_i-\alpha_j+\rho,(\sum_{k=i}^{i+2}\alpha_k)^\vee\ra=0&\text{ if }j>i+3\\
\la-3\alpha_i-\alpha_j+\rho,(\sum_{k=i}^{i+1}\alpha_k)^\vee\ra=0&\text{ if }j=i+2\\
\la-3\alpha_i-\alpha_j+\rho,(\sum_{k=i}^{i+3}\alpha_k)^\vee\ra=0&\text{ if }j=i+3\\
\la-3\alpha_i-\alpha_j+\rho,(\sum_{k=i-2}^{i}\alpha_k)^\vee\ra=0&\text{ if }j<i-3\\
\la-3\alpha_i-\alpha_j+\rho,(\sum_{k=i-1}^{i}\alpha_k)^\vee\ra=0&\text{ if }j=i-2\\
\la-3\alpha_i-\alpha_j+\rho,(\sum_{k=i-3}^{i}\alpha_k)^\vee\ra=0&\text{ if }j=i-3.
\end{array}
\right.
\eeqn
Thus also in this case $-3\alpha-\beta$ dose not contribute to the potential support.

Finally, assume that $\chi=-4\alpha$. If $\het(\alpha)\geq 4$, then $\chi$ is a sum of distinct negative roots, so we can assume that $\het(\alpha)\leq 3$. Suppose that $\alpha=\alpha_i+\alpha_{i+1}+\alpha_{i+2}$. If $i\leq n-4$, then $\la-4\alpha+\rho,(\sum_{j=i}^{i+3}\alpha_j)^\vee\ra=0$; if $i\geq 2$, then $\la-4\alpha+\rho,(\sum_{j=i-1}^{i+2}\alpha_j)^\vee\ra=0$; thus $\oh^k(X,\cL_\chi)=0$ for all $k$.

Suppose that $\alpha=\alpha_i+\alpha_{i+1}$. If $i\leq n-4$, then $\la-4\alpha+\rho,(\sum_{j=i}^{i+3}\alpha_j)^\vee\ra=0$; if $i\geq 3$, then $\la-4\alpha+\rho,(\sum_{j=i-2}^{i+1}\alpha_j)^\vee\ra=0$ and thus $\oh^k(X,\cL_\chi)=0$ for all $k$.

Suppose that $\alpha=\alpha_i$. If $i\leq n-4$, then $\la-4\alpha+\rho,(\sum_{j=i}^{i+3}\alpha_j)^\vee\ra=0$; if $i\geq 4$, then $\la-4\alpha+\rho,(\sum_{j=i-3}^{i}\alpha_j)^\vee\ra=0$; thus $\oh^k(X,\cL_\chi)=0$ for all $k$ unless when $n=6$ and $i=3$. In this case we have 
$$
-4\alpha_3=(s_5s_1s_4s_2s_3)\cdot(\alpha_1+2\alpha_2+3\alpha_3+2\alpha_4+\alpha_5)\,.
$$

This completes the proof of \eqref{eqn-pt1}.

Consider the short exact sequences
\beqn
0\to\Lb\ot\Lb^{\ot i}\ot(\Lg/\Lb)^{\ot(3-i)}\to\Lg\ot\Lb^{\ot i}\ot(\Lg/\Lb)^{\ot(3-i)}\to\Lg/\Lb\ot\Lb^{\ot i}\ot(\Lg/\Lb)^{\ot(3-i)}\to 0,\ i=0,1,2,3.
\eeqn
From the  exact sequence with $i=3$ we conclude, making use of~\eqref{eqn-tb1}, that
\beqn
\oh^0(X,\Lb^{\ot 4})=0,\ \oh^k(X,\Lb^{\ot 4})\cong \oh^{k-1}(X,\Lb^{\ot3}\ot\Lg/\Lb)\text{ for } k\geq 4\text{ or }k=1
\eeqn
and we further obtain the exact sequence
\beqn
0\to \oh^1(X,\Lb^{\ot3}\ot\Lg/\Lb)\to \oh^2(X,\Lb^{\ot 4})\to\Lg^{\oplus 2}\to \oh^2(X,\Lb^{\ot3}\ot\Lg/\Lb)\to \oh^3(X,\Lb^{\ot 4})\to 0\,.
\eeqn
Using the exact sequence with $i=2$ and making use of~\eqref{auxiliary} and~\eqref{eqn-tb1} we get
\beqn
\oh^0(X,\Lb^{\ot3}\ot\Lg/\Lb)=0,\ \oh^k(X,\Lb^{\ot3}\ot\Lg/\Lb)\cong \oh^{k-1}(X,\Lb^{\ot2}\ot(\Lg/\Lb)^{\ot2})\text{ for } k\geq 3
\eeqn
and we obtain the exact sequence
\begin{multline*}
0\to \oh^0(X,\Lb^{\ot2}\ot(\Lg/\Lb)^{\ot2})\to \oh^1(X,\Lb^{\ot 3}\ot\Lg/\Lb)\to\Lg^{\oplus 2}\oplus (\Lg\otimes\Lg)^{\oplus 2}\to
\\
\to \oh^1(X,\Lb^{\ot2}\ot(\Lg/\Lb)^{\ot2})\to \oh^2(X,\Lb^{\ot 3}\ot\Lg/\Lb)\to 0\,.
\end{multline*}
Using the exact sequence with $i=1$ and making use of~\eqref{auxiliary} and~\eqref{eqn-tb1} we get
\beqn
\oh^k(X,\Lb^{\ot2}\ot(\Lg/\Lb)^{\ot2})\cong \oh^{k-1}(X,\Lb\ot\Lg/\Lb^{\ot 3})\text{ for } k\geq 2,
\eeqn
and the exact sequence
\beqn
0\to \oh^0(X,\Lb^{\ot 2}\ot(\Lg/\Lb)^{\ot2})\to\Lg^{\oplus 2}\oplus (\Lg\otimes\Lg)\to \oh^0(X,\Lb\ot\Lg/\Lb^{\ot 3})\to \oh^1(X,\Lb^{\ot 2}\ot(\Lg/\Lb)^{\ot2})\to 0\,.
\eeqn
Using the exact sequence with $i=0$ and making use of~\eqref{auxiliary} and~\eqref{eqn-tb1} we get
\beqn
\oh^k(X,\Lb\ot(\Lg/\Lb)^{\ot3})\cong \oh^{k-1}(X,(\Lg/\Lb)^{\ot 4}),\ k\geq 2,
\eeqn
and we obtain the exact sequence
\beqn
0\to \oh^0(X,\Lb\ot(\Lg/\Lb)^{\ot3})\to\Lg\ot \oh^0(X,(\Lg/\Lb)^{\ot3})\to \oh^0(X,(\Lg/\Lb)^{\ot 4})\to \oh^1(X,\Lb\ot(\Lg/\Lb)^{\ot3})\to 0.
\eeqn
It follows that
\beq\label{eqn-i1}
\oh^k(X,\Lb^{\ot4})=0,\ k=0,1 \ \ \text{and} \ \ \ \oh^k(X,\Lb^{\ot4})\cong \oh^{k-4}(X,(\Lg/\Lb)^{\ot 4}),\ k\geq 5;
\eeq
and
\begin{multline}\label{eqn-i2}
0\to\oh^0(X,\Lb\ot(\Lg/\Lb)^{\ot3})\to\Lg\ot H^0(X,(\Lg/\Lb)^{\ot3})\to 
\\
\to \oh^0(X,(\Lg/\Lb)^{\ot4})\to H^4(X,\Lb^{\ot 4})\to 0.
\end{multline}

As $0$ is not in the potential support of $H^k(X,(\Lg/\Lb)^{\ot 4})$, using \eqref{eqn-pt1}, \eqref{eqn-i1} and \eqref{eqn-i2} we see that \eqref{eqn-sub} follows.

\subsection{Type $B_2$.}\label{ssec-B2}
 As the case $r=1$ applies to any type, we consider the case $r=2$ here. We write $\Phi^+=\{\alpha_1,\alpha_2,\alpha_1+\alpha_2,2\alpha_1+\alpha_2\}$, where $\la\alpha_1,\alpha_2^\vee\ra=-1$ and $\la\alpha_2,\alpha_1^\vee\ra=-2$.

Just as in subsection~\ref{2} one checks that:
$$
\PSupp(\oh^k(X,\Lb\ot\Lb)) =\left\{\begin{array}{ll}\{ 0\}&\text{ if }k=0,1,3\\\{0,\alpha_1+\alpha_2\}&\text{ if }k=2\\\emptyset&\text { if }k\geq 4.\end{array}\right.
$$
Proceeding as in~\ref{2} we see that:
\beq
\label{C2}
\oh^{k}(X,\Lb\ot\Lb)\ = \ \begin{cases}
0 &\text{if $k=0$ of $k\geq 3$}
\\
\tk &\text{if $k=1$}
\\
 L(\alpha_1+\alpha_2) &\text{if $k=2$}\,.
\end{cases}
\eeq
The only difference to the argument in subsection~\ref{2} is that the weight $\alpha_1+\alpha_2$ occurs for $k=2$ only and it occurs precisely once in that case; otherwise the argument is the same.

\section{Results and  counterexamples}

\subsection{The case of $A_2$} In this case we have the following:
\begin{thm}
For $\Lg=\mathfrak{sl}_3$ the variety $\cA_r$ has rational singularities, and thus Cohen-Macaulay, for all $r$. 
\end{thm}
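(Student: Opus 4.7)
The plan is to apply Lemma~\ref{criterion}: to establish rational singularities, and hence Cohen--Macaulayness in characteristic zero, it suffices to verify
\[
\oh^i(X,\Lb^{\otimes r})=0 \qquad \text{for all } r\geq 1 \text{ and all } i\geq r.
\]
The decisive observation, particular to type $A_2$, is that the flag variety $X=\SL_3/B$ is three-dimensional. Hence by Grothendieck vanishing $\oh^i(X,E)=0$ for every coherent sheaf $E$ and every $i\geq 4$, so the required vanishing is automatic as soon as $r\geq 4$. Only the three cases $r=1,2,3$ demand any genuine input.

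Fortunately all three of those cases have already been settled in Section 5. For $r=1$, subsection~\ref{1} gives $\oh^i(X,\Lb)=0$ for every $i$, in particular for $i\geq 1$. For $r=2$, subsection~\ref{2} shows that $\oh^i(X,\Lb\otimes\Lb)$ vanishes in every degree other than $1$, so in particular for $i\geq 2$. For $r=3$, the $n=3$ part of subsection~\ref{3} establishes that $\oh^i(X,\Lb^{\otimes 3})$ is concentrated in degree $2$, and therefore vanishes for $i\geq 3$. Combining these three facts with the dimensional vanishing for $r\geq 4$ verifies condition~\eqref{condition} for every $r\geq 1$.

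Lemma~\ref{criterion} then yields that $\cA_r$ has rational singularities for every $r$, whence Cohen--Macaulayness follows. There is no remaining obstacle: the serious cohomological work has been done in Section 5, and the only structural ingredient added here is the low dimension $\dim X=3$, which uniformly handles all $r\geq 4$.
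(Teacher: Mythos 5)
Your proposal is correct and follows the same route as the paper: invoke Lemma~\ref{criterion}, note that $\dim X=3$ disposes of all $r\geq 4$ (the paper compresses this into the phrase ``by dimension reasons''), and cite the computations of subsections~\ref{1}, \ref{2}, and \ref{3} for $r=1,2,3$. Your explicit appeal to Grothendieck vanishing merely spells out the dimension argument the paper leaves implicit.
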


\begin{proof}
According to Lemma~\ref{criterion} it suffices to show that
\beqn
\oh^i(X, \Lb^{\otimes r})=0\text{ for all } i\geq r\geq 1.
\eeqn
By dimension reasons we only have to consider the cases $r=1,2,3$. These cases are treated in subsections~\ref{1},\ \ref{2}, and\ \ref{3}, respectively. 

The fact that $\cA_r$ is Cohen-Macaulay follows; see for example \cite[Page 50]{K}.

\end{proof}

\begin{rmk}
By a slight modification of our methods one can show that this result holds for all characteristics above 3. 
\end{rmk}

\subsection{The case of $B_2$ and $r=2$} We will show that $\cA_2$ is not normal in the case of $B_2$. For $\cA_2$ to be normal, the map
\beqn
\Sym (\Lg\oplus \Lg) \to \oh^0(X,\Sym(\Lg/\Lb\oplus \Lg/\Lb))
\eeqn
has to be onto. In particular this has to hold for $\Sym^2$ and hence,  by decomposing the $\Sym^2$  on both sides, the map
\beqn
 \Lg\otimes \Lg \to \oh^0(X,\Lg/\Lb\otimes \Lg/\Lb)
\eeqn
has to be onto. Making use of the spectral sequence~\eqref{spectral sequence}, \eqref{eqn-van1}, and  \eqref{C2} we get that 
$$
E_1^{p,-q} =   (\Lg^{\otimes 2-q})^{\oplus{2\choose q}}\otimes\oh^p(X,\Lb^{\otimes q})\ = \ \begin{cases}
\Lg^{\otimes 2} & \text{if $p=q=0$}\\
 L(\alpha_1+\alpha_2) 	& \text{if $p=q=2$}\\
0	& \text{otherwise if $p-q\geq 0$\,.}		
\end{cases}
$$
As the term $E_1^{2,-2}$ must survive in the spectral sequence we see that $\Lg^{\otimes 2} \to \oh^0(X, (\Lg/\Lb)^{\otimes 2} )$ cannot be onto and so $\cA_2$ is not normal in this case. 

\begin{rmk}Using the same method as in the previous section, one can show that
\beqn
\PSupp(H^k(X,\wedge^3(\Lb^{\oplus 2})))=\left\{\begin{array}{cc}\{0\}&\text{ if }k=0,1,4\\
\{0,\alpha_1+\alpha_2,2\alpha_1+\alpha_2\}&\text{ if }k=2\\
\{0,\alpha_1+\alpha_2\}&\text{ if }k=3.
\end{array}\right.
\eeqn
In subsection \ref{ssec-B2} we have shown that $H^i(X,\wedge^2(\Lb^{\oplus 2}))=0$ for $i\geq 3$. Making use of the Koszul complex~\eqref{koszul complex}, we see that $H^k(X,\wedge^3(\Lb^{\oplus 2})))\cong H^{k-3}(X,\Sym^3((\Lg/\Lb)^{\oplus 2}))$. Since $0$ is not in the potential support of $H^{k}(X,\Sym^3((\Lg/\Lb)^{\oplus 2}))$, we conclude that
\beqn
H^4(X,\wedge^3(\Lb^{\oplus 2}))=H^{1}(X,\Sym^3((\Lg/\Lb)^{\oplus 2}))=0.
\eeqn
Now the dimension reasons imply that $H^i(X,\wedge^j(\Lb^{\oplus 2}))=0$ for all $j$ and all $i\geq j+1$, which in turn implies that
\beqn
H^{i}(X,\Sym((\Lg/\Lb)^{\oplus 2}))=0\text{ for all }i\geq 1.
\eeqn
Thus we see that the normalization of $\cA_2$ for $B_2$ has rational singularities.
\end{rmk}

\subsection{The case of $A_3$ and $r=3$}

Making use the spectral sequence~\eqref{spectral sequence}, \eqref{eqn-van1}, \eqref{eqn-t1}, and \eqref{eqn-tb1} we get that 
$$
E_1^{p,-q} =   (\Lg^{\otimes 3-q})^{\oplus{3\choose q}}\otimes\oh^p(X,\Lb^{\otimes q})\ = \ \begin{cases}
\Lg^{\otimes 3} & \text{if $p=q=0$}\\
L(\alpha_1+2\alpha_2+\alpha_3) 	& \text{if $p=q=3$}\\
0	& \text{otherwise if $p-q\geq 0$\,.}		
\end{cases}
$$
As the term $E_1^{3,-3}$ must survive in the spectral sequence we see that $\Lg^{\otimes 3} \to \oh^0(X, (\Lg/\Lb)^{\otimes 3} )$ cannot be onto and so, arguing as in the $B_2$ case above, we see that $\cA_3$ is not normal. 

\begin{rmk}Similar to the case of $\cA_2$ for $B_2$, the normalization of $\cA_3$ for type $A_3$ has rational singularities. One proceeds the same way as in that case. In particular, we have
\beqn
\PSupp(H^k(X,\wedge^4(\Lb^{\oplus 3})))=\left\{\begin{array}{cc}\{0\}&\text{ if }k=5,6\\
\{0,\alpha_1+\alpha_2+\alpha_3,\alpha_1+2\alpha_2+\alpha_3\}&\text{ if }k=4\\
\left\{\displaystyle{\substack{0,\,\alpha_1+\alpha_2+\alpha_3,\,\alpha_1+2\alpha_2+\alpha_3,\\\alpha_1+2\alpha_2+2\alpha_3,\,2\alpha_1+2\alpha_2+\alpha_3}}\right\}&\text{ if }k=3,
\end{array}\right.
\eeqn
\beqn
\PSupp(H^6(X,\wedge^5(\Lb^{\oplus 3})))=\{\alpha_1+\alpha_2+\alpha_3,0\}.
\eeqn
Note that $0$ and $\alpha_1+\alpha_2+\alpha_3$ are not in the potential support of $H^1(X,\Sym^5((\Lg/\Lb)^{\oplus 3}))$.
\end{rmk}

\subsection{The case of $A_5$ and $r=4$}

Making use the spectral sequence~\eqref{spectral sequence}, \eqref{eqn-van1}, \eqref{eqn-t1}, \eqref{eqn-tb1}, and \eqref{eqn-sub} we see  that 
$$
E_1^{p,-q} =   (\Lg^{\otimes 3-q})^{\oplus{3\choose q}}\otimes\oh^p(X,\Lb^{\otimes q})\ = \ \begin{cases}
L(\alpha_1+2\alpha_2+3\alpha_3+2\alpha_4+\alpha_5) 	& \text{if $p=5$ and $q=4$}\\
0	& \text{otherwise if $p-q>0 $.}		
\end{cases}
$$
As the term $E_1^{5,-4}$ must survive in the spectral sequence we see that $\oh^1(X, (\Lg/\Lb)^{\otimes 4})=L(\alpha_1+2\alpha_2+3\alpha_3+2\alpha_4+\alpha_5)$. In particular, it is not zero and hence  in this case the normalization of $\cA_4$ does not have rational singularities.

\end{document}